\newlength{\originalbase}
\newtheorem{theorem}{Theorem}
\renewcommand{\SS}{\mathbb{S}}
\newcommand{\RR}{\mathbb{R}}
\begin{document}

\begin{center}
{\Large\bf Angles as probabilities}\\[4mm]
{\bf\em David V.~Feldman \\
Daniel A.~Klain}
\end{center}

\vspace{4mm}

Almost everyone knows that the inner angles of a triangle sum to $180^{\rm o}$.
But if you ask the typical mathematician how to sum the solid inner angles 
over the vertices of a tetrahedron, you are likely to receive 
a blank stare or a mystified shrug.  
In some cases you may be directed to the 
Gram-Euler relations for higher dimensional polytopes \cite{Grunbaum,Mull1,Shep-Perles,Sommer}, 
a 19th century result unjustly consigned to relative obscurity.  
But the answer is really much simpler than that, and here it is:
\begin{quotation}
The sum of the solid inner vertex angles of a tetrahedron $T$, \linebreak
divided by $2\pi$, gives the probability that the orthogonal
projection of $T$ onto a random 2-plane is a triangle.
\end{quotation}
How simple is that?  We will prove a more general theorem (Theorem~\ref{anglesumthm}) 
for simplices in $\RR^n$, but first consider the analogous
assertion in $\RR^2$.  The sum in radians of the angles of a triangle (2-simplex) $T$, 
when divided by the length $\pi$ of the unit semicircle,
gives the probability that the orthogonal
projection of $T$ onto a random line is a convex segment (1-simplex).  
Since this is {\em always} the case, the
probability is equal to 1, and the inner angle sum for
every triangle is the same.  
By contrast, a higher dimensional $n$-simplex may project 
one dimension down either to an $(n-1)$-simplex or to a lower dimensional
convex polytope having $n+1$ vertices.  
The inner angle sum gives a measure of how often each of these possibilities occurs.

Let us make the notion of ``inner angle" more precise.
Denote by $\SS^{n-1}$ the unit sphere in $\RR^n$ centered at the origin.  Recall that $\SS^{n-1}$ has
$(n-1)$-dimensional volume (i.e. surface area) $n\omega_n$,
where
$\omega_n = \tfrac{\pi^{n/2}}{\Gamma(1+n/2)}$
is the Euclidean volume of the unit ball in $\RR^n$.

Suppose that $P$ is a convex polytope in $\RR^n$, and let $v$ be any point of $P$.
The solid inner angle $a_P(v)$ of $P$ at $v$ is given by
$$a_P(v) = \{ u \in \SS^{n-1} \; | \; \ v+\epsilon u \in P \quad \hbox{ for some } \epsilon > 0 \}
$$
Let $\alpha_P(v)$ denote the measure of the solid angle $a_P(v) \subseteq \SS^{n-1}$, 
given by the usual surface area measure on subsets of the sphere.  For the moment we are 
primarily concerned with values of $\alpha_P(v)$ when $v$ is a {\em vertex} of $P$.

If $u$ is a unit vector, then denote by $P_u$ the orthogonal projection of $P$ onto the subspace $u^\perp$ in $\RR^n$.  
Let $v$ be a vertex of $P$.  The projection $v_u$ lies in the relative interior of $P_u$ if and only if
there exists $\epsilon \in (-1,1)$ such that $v+\epsilon u$ lies in the interior of $P$.  This holds
iff $u \in \pm a_P(v)$.  If $u$ is a random unit vector in $\SS^{n-1}$, then
\begin{equation}
\mathtt{Probability[}v_u \in \mathrm{relative \; interior}(P_u)\mathtt{]} = \frac{2\alpha_P(v)}{n\omega_n},
\label{pv}
\end{equation}
This gives the probability that $v_u$ is no longer a vertex of $P_u$.

For {\em simplices} we now obtain the following theorem.
\begin{theorem}[Simplicial Angle Sums]
Let $\Delta$ be an $n$-simplex in $\RR^n$, and let $u$ be a random unit vector.  
Denote by $p_\Delta$ the probability that the orthogonal projection $\Delta_u$ is an $(n-1)$-simplex.  Then
\begin{equation}
p_\Delta = \frac{2}{n\omega_n} \sum_v \alpha_\Delta(v)
\label{anglesum}
\end{equation}
where the sum is taken over all vertices of the simplex $\Delta$.
\label{anglesumthm}
\end{theorem}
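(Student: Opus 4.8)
The plan is to decompose the event that $\Delta_u$ is an $(n-1)$-simplex into the $n+1$ mutually disjoint events that a given vertex $v$ is absorbed into the relative interior of $\Delta_u$, one for each vertex $v$, and then to add up the probabilities supplied by \eqref{pv}. We assume $n\ge 2$. Write $v_0,\dots,v_n$ for the vertices of $\Delta$, keep the paper's notation $w_u$ for the orthogonal projection of a point $w$ onto $u^\perp$, and for a unit vector $u$ let $E_i$ be the event $(v_i)_u\in\mathrm{relint}(\Delta_u)$. First observe that, since $\Delta$ is full-dimensional, $\mathrm{aff}(\Delta)=\RR^n$, so $\mathrm{aff}(\Delta_u)=u^\perp$ for every $u$ and $\Delta_u$ is always exactly $(n-1)$-dimensional (it never collapses to lower dimension). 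If $E_i$ holds then $(v_i)_u$ is not a vertex of $\Delta_u$, hence is redundant and $\Delta_u=\mathrm{conv}\{(v_j)_u:j\ne i\}$; were two projected vertices simultaneously in $\mathrm{relint}(\Delta_u)$, then $\Delta_u$ would be the convex hull of only $n-1$ points and so have dimension at most $n-2$, which is impossible. Hence $E_0,\dots,E_n$ are pairwise disjoint, for every $u$.

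The core step is to prove that, up to a set of unit vectors of measure zero, $\Delta_u$ is an $(n-1)$-simplex exactly when some $E_i$ occurs. One direction needs no exceptional set: if $E_i$ holds then $\Delta_u=\mathrm{conv}\{(v_j)_u:j\ne i\}$ has dimension $n-1$, which forces the $n$ points $\{(v_j)_u:j\ne i\}$ to be affinely independent, so $\Delta_u$ is an $(n-1)$-simplex. For the converse, assume $\Delta_u$ is an $(n-1)$-simplex, so it has exactly $n$ vertices. Outside the measure-zero set of $u$ parallel to some difference $v_i-v_j$ with $i\ne j$, the $n+1$ projected vertices are distinct, so exactly one of them, say $(v_k)_u$, fails to be a vertex of $\Delta_u$; being a non-vertex point of the $(n-1)$-simplex $\Delta_u$, it lies either in $\mathrm{relint}(\Delta_u)$ or on a proper face. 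If it lay on a proper face, then $n$ of the $n+1$ projected vertices would be affinely dependent in $u^\perp$, which forces $u$ into the $(n-1)$-dimensional direction space of the affine flat spanned by the corresponding $n$ vertices of $\Delta$; there are only finitely many such flats, so this is a null event. Off these null sets, $(v_k)_u\in\mathrm{relint}(\Delta_u)$, i.e.\ $E_k$ holds.

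Putting the two steps together, up to measure zero the event that $\Delta_u$ is an $(n-1)$-simplex coincides with the disjoint union $E_0\cup\cdots\cup E_n$, so by \eqref{pv}
\[ p_\Delta \;=\; \sum_{i=0}^{n}\Pr[E_i] \;=\; \sum_{i=0}^{n}\frac{2\alpha_\Delta(v_i)}{n\omega_n} \;=\; \frac{2}{n\omega_n}\sum_v\alpha_\Delta(v), \]
which is \eqref{anglesum}. I expect the main obstacle to be the combinatorial dichotomy in the core step, and within it the verification that the unique absorbed vertex lands in the \emph{relative interior} of $\Delta_u$ and not merely on its boundary: that is the one place where a genericity argument is genuinely needed, whereas the disjointness of the $E_i$ and the easy implication ``one absorbed vertex $\Rightarrow$ $(n-1)$-simplex'' hold for all $u$ and use only that $\Delta$ is full-dimensional.
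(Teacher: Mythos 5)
Your proof is correct and follows essentially the same route as the paper's: decompose the event that $\Delta_u$ is an $(n-1)$-simplex into the mutually exclusive events that some one vertex projects into the relative interior, then sum the probabilities given by \eqref{pv}. You simply make explicit the genericity details (distinctness of projected vertices and the null set where a projected vertex lands on a proper face of $\Delta_u$) that the paper's terser argument passes over.
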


\begin{proof}  Since $\Delta$ is an $n$-simplex, $\Delta$ has $n+1$ vertices, and a projection $\Delta_u$
has either $n$ or $n+1$ vertices.  (Since $\Delta_u$ spans an affine space of dimension $n-1$, it cannot have fewer than $n$ vertices.)
In other words, either exactly 1 vertex of $\Delta$ falls to the relative interior of $\Delta_u$, so that
$\Delta_u$ is an $(n-1)$-simplex, or none of them do.  
By the law of alternatives, the probability $p_\Delta$ is now given by the sum of the probabilities~(\ref{pv}),
taken over all vertices of the simplex $\Delta$.
\end{proof}

The probability~(\ref{anglesum}) is always equal to 1 for 2-dimensional simplices (triangles).
For 3-simplices (tetrahedra) the probability may take any value $0 < p_\Delta < 1$.  To obtain a value
closer to one, consider the convex hull of an equilateral triangle in $\RR^3$ with a point outside the triangle,
but very close to its center.  To obtain $p_\Delta$ close to zero, consider the convex hull of two skew 
line segments in $\RR^3$ whose centers are very close together (forming a tetrahedron that is almost
a parallelogram).  Similarly, for
$n \geq 3$ the solid vertex angle sum of an $n$-simplex varies within a range
$$0 < \sum_v \alpha_T(v) < \frac{n\omega_n}{2}.$$
Equality at either end is obtained only if one allows for the degenerate limiting cases.
These bounds were obtained earlier by Gaddum \cite{Gaddum1,Gaddum2}, using a more complicated (and 
non-probabilistic) approach.

Similar considerations apply 
to the solid angles at arbitrary faces of convex polytopes.
If $F$ is a face of a convex polytope $P$, denote 
the centroid of $F$ by $\hat{F}$, and
define the solid inner angle measure $\alpha_P(F) = \alpha_P(\hat{F})$, using the definition
above for the solid angle at a {\em point} in $P$.
In analogy to~(\ref{pv}), we have
\begin{equation}
\mathtt{Probability[} \hat{F}_u \in \mathrm{relative \; interior}(P_u)\mathtt{]} 
= \frac{2\alpha_P(F)}{n\omega_n},
\label{pvk}
\end{equation}
Omitting cases of measures zero, this gives the 
probability that a proper face $F$ is no longer a face of $P_u$.
(Note that $\dim F_u  = \dim F$ for all directions $u$ except a set of measure zero.) 
Taking complements, we have
\begin{equation}
\mathtt{Probability[} F_u \hbox{ is a proper face of } P_u \mathtt{]} \; = \; 1 - \frac{2\alpha_P(F)}{n\omega_n}.
\label{pfc}
\end{equation}
For $0 \leq k \leq n-1$, denote by $f_k(P)$ the number of $k$-dimensional faces of a polytope $P$.
The sum of the probabilities~(\ref{pfc}) gives the expected number of $k$-faces of the projection
of $P$ onto a random hyperplane $u^\perp$; that is,
\begin{equation}
\mathtt{Exp[} f_k(P_u) \mathtt{]} \; = \; \sum_{\dim F = k} \left( 1 - \frac{2\alpha_P(F)}{n\omega_n} \right) 
\; = \; f_k(P) - \frac{2}{n\omega_n}\sum_{\dim F = k} \alpha_P(F),
\label{expk}
\end{equation}
where the middle sum is taken over $k$-faces $F$ of the polytope $P$.  

If $P$ is a convex polygon in $\RR^2$, then $P_u$ is always a line segment with exactly 2 vertices,
that is, $f_0(P_u) = 2$.  In this case the expectation identity~(\ref{expk}) yields the familiar
$$\sum_v \alpha_P(v) \; = \; \pi(f_0(P) - 2).
$$
If $P$ is a convex polytope in $\RR^3$, then $P_u$ is a convex polygon, 
which always has exactly as many vertices as edges; that is, $f_0(P_u) = f_1(P_u)$.  Therefore,
$\mathtt{Exp[} f_0(P_u) \mathtt{]} = \mathtt{Exp[} f_1(P_u) \mathtt{]}$, and 
the expectation identities~(\ref{expk}) imply that
$$\frac{1}{2\pi}\sum_{vertices \; v} \alpha_P(v) - \frac{1}{2\pi}\sum_{edges \; e} \alpha_P(e) 
\; = \; f_0(P) - f_1(P) \; = \; 2 - f_2(P).$$
where the third equality follows from the classical Euler formula $f_0 - f_1 + f_2 = 2$
for convex polyhedra in $\RR^3$.

These arguments were generalized by Perles and Shephard \cite{Shep-Perles}
(see also \cite[p. 315a]{Grunbaum})
to give a simple proof of the classical Gram-Euler identity for convex polytopes:
\begin{equation}
\sum_{F \subseteq \partial P} (-1)^{\dim F} \alpha_P(F) = (-1)^{n-1} n \omega_n,
\label{g-e-id}
\end{equation}
where the sum is taken over all proper faces $F$ of an
$n$-dimensional convex polytope $P$.  
In the general case one applies
the additivity of expectation to alternating sums over $k$ of the identities~(\ref{expk}), obtaining identities that relate the Euler numbers of the boundaries $\partial P$ and $\partial P_u$. 
Since the $\partial P$ is an piecewise-linear $(n-1)$-sphere, while $\partial P_u$ is a 
piecewise-linear $(n-2)$-sphere,
these Euler numbers are easily computed, and~(\ref{g-e-id}) follows.

The Gram-Euler identity~(\ref{g-e-id}) can be viewed as a discrete analogue of the Gauss-Bonnet theorem,
and has been since generalized to Euler-type identities for angle sums over polytopes 
in spherical and hyperbolic spaces \cite{Grunbaum,Mull1,Sommer}, as well as for mixed volumes 
and other valuations on polytopes \cite{Mull3}.

\vspace{2mm}
\noindent David V.~Feldman: {\em Dept.~of Mathematics, University of New Hampshire, Durham, NH 03824 USA, {\tt David.Feldman@unh.edu}} 

\vspace{2mm}
\noindent Daniel A.~Klain: {\em Dept.~of Mathematical Sciences, 
University of Massachusetts Lowell, Lowell, MA 01854 USA,
{\tt Daniel\_Klain@uml.edu}}

\end{document}